\newcommand{\emm}{\mathrm{1\hspace{-0.3em} I}}
\newtheorem{thm}{Theorem}
\newtheorem{dfn}[thm]{Definition}
\newtheorem{cor}[thm]{Corollary}
\def\B{{\mathfrak{B}}}
\def\ext{{\rm{ext} \,}}
\def\N{{\mathbb{N}}}
\begin{document} 

\begin{center}
{\Large Dilation invariant Banach limits}\\
E. Semenov, F. Sukochev, A. Usachev~\footnote{Corresponding author}, D. Zanin
\end{center}

\begin{abstract}
We study two subclasses of Banach limits: the one consisting of Banach limits which are invariant with respect of the Ces\`aro operator and another one consists of Banach limits which are invariant with respect to all dilations. We prove that the first is a proper subset of the second. We also show that these classes are at the maximal distance from the set $\ext \B$ of all extreme points of the set of all Banach limits.\\
Keywords: Banach limits, Ces\`aro operator, dilation operator, extreme points.
\end{abstract}

\begin{flushright}
To the memory of W. A. J. Luxemburg.
\end{flushright}

\section{Introduction and Preliminaries}

Throughout the paper we denote by $\ell_\infty$ the space of all bounded sequences $x=(x_1,x_2,\ldots)$ equipped with the norm
\[
 \|x\|_{l_\infty}=\sup\limits_{n\in\N}|x_n|,
\]
and the usual partial order. Here $\N$ stands for the set of natural numbers.

\begin{dfn}
 A linear functional $B\in \ell_\infty^*$ is said to be a Banach limit  if
\begin{enumerate}
\item
$B\geqslant0$, that is $Bx\geqslant0$ for every $x\geqslant0$,

\item
$B\emm=1$, where $\emm=(1,1,\ldots)$,

\item
$B(Tx)=B(x)$ for every $x\in \ell_\infty$, where $T$ is a translation operator,
that is \\ $T(x_1,x_2,\ldots)=(0,x_1,x_2,x_3,\ldots)$.
\end{enumerate}
\end{dfn}

The existence of Banach limits was established by S.~Mazur and then appeared in the book of S.~Banach~\cite{B}.
 Since then Banach limits were proved to be a useful tool in functional analysis. 
Recent developments in the theory of operator ideals and singular traces~\cite{DPSS, DPSSS1, DPSSS2, SSUZ} reformulated 
important and difficult problems in these areas in terms of Banach limits.

We denote the set of all Banach limits by $\B$. It is easy to see that $\B$
is a closed convex set on the unit sphere of $\ell_\infty^*$. Hence, $\|B_1-B_2\|_{\ell_\infty^*}\leqslant2$ for every
$B_1,B_2\in\B$. It follows directly from the definition of Banach limits that
\[
 \liminf\limits_{n\to\infty} x_n\leqslant
 Bx\leqslant
 \limsup\limits_{n\to\infty} x_n
\]
for every $x\in \ell_\infty$, $B\in \B$. In particular,
$Bx=\lim\limits_{n\to\infty} x_n$ for every convergent sequence $x\in \ell_\infty$. 

Following G. G. Lorentz~\cite{L}, we call a sequence $x\in \ell_\infty$
almost convergent to a number $a\in \mathbb R$, if $B(x)=a$ for every $B\in\B$.

We denote the set of all almost convergent sequences (resp., sequences almost
conver\-gent to zero) by $ac$ (resp., $ac_0$). The following criterion for almost convergence
was proved by Lorentz~\cite{L}.

\begin{thm}\label{lorentz}
 A sequence $x$ is almost convergent to $a$  if and only if
\[
 \lim\limits_{n\to\infty}\frac1n\sum\limits_{k=m+1}^{m+n}x_k=a
\]
uniformly in  $m\in\N$.
\end{thm}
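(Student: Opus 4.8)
The plan is to prove the two implications separately. Abbreviate $\sigma_{n,m}(x):=\frac1n\sum_{k=m+1}^{m+n}x_k$ for $n\in\N$ and $m\in\N\cup\{0\}$, so that the statement to be proved becomes: $Bx=a$ for every $B\in\B$ if and only if $\sigma_{n,m}(x)\to a$ as $n\to\infty$ uniformly in $m$. Since $B\emm=1$ for every $B\in\B$ and $\sigma_{n,m}(\emm)=1$ for all $n$ and $m$, replacing $x$ by $x-a\emm$ reduces everything to the case $a=0$.

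\emph{Uniform convergence implies almost convergence.} Fix $B\in\B$. Property (3) of the definition of a Banach limit gives $B(T^{j}x)=Bx$ for all $j\geqslant0$, hence $Bx=B\bigl(\frac1n\sum_{j=0}^{n-1}T^{j}x\bigr)$ for every $n$. A direct coordinatewise computation identifies the $k$-th coordinate of $\frac1n\sum_{j=0}^{n-1}T^{j}x$ as $\sigma_{n,k-n}(x)$ when $k\geqslant n$ and as $\frac kn\,\sigma_{k,0}(x)$ when $1\leqslant k\leqslant n-1$, whence
\[
\Bigl\|\tfrac1n\sum_{j=0}^{n-1}T^{j}x\Bigr\|_{\ell_\infty}=\max\Bigl\{\ \sup_{m\geqslant0}|\sigma_{n,m}(x)|\ ,\ \ \max_{1\leqslant k\leqslant n-1}\tfrac kn\,|\sigma_{k,0}(x)|\ \Bigr\}.
\]
The first term goes to $0$ by hypothesis; for the second, given $\delta>0$ choose $k_{0}$ with $|\sigma_{k,m}(x)|<\delta$ for all $k\geqslant k_{0}$ and all $m\geqslant0$, and split the maximum at $k_{0}$, estimating the finitely many remaining terms by $k_{0}\|x\|_{\ell_\infty}/n$. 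Hence the whole quantity tends to $0$, and since $|Bx|$ is bounded by it for every $n$, we conclude $Bx=0$.

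\emph{Almost convergence implies uniform convergence.} Assume the uniform convergence fails. Then there are $\delta>0$, integers $n_{i}\to\infty$ and $m_{i}$ with $|\sigma_{n_{i},m_{i}}(x)|\geqslant\delta$ for every $i$. The functionals $\varphi_{i}\in\ell_\infty^{*}$ given by $\varphi_{i}(y):=\sigma_{n_{i},m_{i}}(y)$ are positive and satisfy $\varphi_{i}(\emm)=1$, so they lie on the unit sphere of $\ell_\infty^{*}$; by the Banach--Alaoglu theorem they admit a weak$^{*}$ cluster point $\varphi$, which inherits $\varphi\geqslant0$, $\varphi(\emm)=1$ and $|\varphi(x)|\geqslant\delta$. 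Because $|\varphi_{i}(Ty)-\varphi_{i}(y)|\leqslant 2\|y\|_{\ell_\infty}/n_{i}\to0$ for every $y\in\ell_\infty$, the cluster point satisfies $\varphi(Ty)=\varphi(y)$; thus $\varphi\in\B$ while $\varphi(x)\neq0$, which contradicts that $x$ is almost convergent to $0$.

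I expect the second implication to be the main obstacle: one has to manufacture a bona fide Banach limit out of a single instance of failure of uniform Ces\`aro convergence. Two points there require care. First, $\ell_\infty$ is non-separable, so the compactness step must be run through nets (or a free ultrafilter) rather than subsequences. Second, translation invariance is preserved under passage to the limit only because the ``boundary discrepancy'' $\frac1{n_{i}}|y_{m_{i}}-y_{m_{i}+n_{i}}|$ vanishes as $n_{i}\to\infty$, which is precisely where the growth of $n_{i}$ enters. By contrast, the first implication is routine once one writes down the averaged shift $\frac1n\sum_{j=0}^{n-1}T^{j}x$ and splits it into a bulk part controlled by the hypothesis and a vanishing initial segment.
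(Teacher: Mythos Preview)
The paper does not supply its own proof of this theorem: it is quoted as Lorentz's classical result and cited to~\cite{L} without argument. So there is nothing in the paper to compare against.

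Your proof is correct and is essentially the standard one. A couple of minor remarks. In the forward direction you work harder than necessary: since any Banach limit annihilates $c_0$, the first $n-1$ coordinates of $\frac1n\sum_{j=0}^{n-1}T^{j}x$ are irrelevant and it suffices to observe that its tail coordinates equal $\sigma_{n,k-n}(x)$, whence $|Bx|\leqslant\limsup_k|\sigma_{n,k-n}(x)|\leqslant\sup_{m\geqslant0}|\sigma_{n,m}(x)|\to0$; the splitting at $k_0$ can be dropped. In the converse direction your use of Banach--Alaoglu is fine; just note explicitly that one passes to a weak$^*$-convergent subnet, along which $n_{i_\alpha}\to\infty$ (cofinality of the index map guarantees this), so that $\varphi_{i_\alpha}(Ty)-\varphi_{i_\alpha}(y)\to0$ indeed transfers to $\varphi(Ty)=\varphi(y)$. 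The inheritance of $|\varphi(x)|\geqslant\delta$ is legitimate because $\{\psi:|\psi(x)|\geqslant\delta\}$ is weak$^*$-closed. Finally, the paper indexes the uniformity over $m\in\N$ while you allow $m=0$; this is harmless since $|\sigma_{n,0}(x)-\sigma_{n,1}(x)|\leqslant 2\|x\|_{\ell_\infty}/n$.
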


It is easy to see that Banach limits are not multiplicative functionals on $\ell_\infty$. Indeed, for the sequence $x=(1,0,1,0, ...)$ we have $x+Tx=\emm$ and $B(x)=1/2$. Hence,
$$0=B(x\cdot Tx)\neq B(x) \cdot B(Tx)=\frac14.$$
This fact sets apart the set of all Banach limits and the set of all extreme points of the set of all extended limits (or, states on the algebra $\ell_\infty$), which coincides with the set of all bounded multiplicative functionals (or, pure states) on this algebra. 
Moreover, the distance between Banach limits and the bounded multiplicative functionals is 2. Indeed, for every multiplicative functional $\gamma$ on $\ell_\infty$ and every $x\in \{0,1\}^\N$ (the set of all sequences of zeroes and ones) we have $\gamma(x)=\gamma(x^2)=(\gamma(x))^2$. Thus, $\gamma(x)$ is either $0$ or $1$. Fix $j\in\N$ and take $x=\chi_{j\N}\in \{0,1\}^\N$. Since $\sum_{i=0}^{j-1} T^ix=\emm$, if follows that $\gamma(T^ix)=1$ for some $0\le i\le j-1$. On the other hand, the sequence $x$ is almost convergent and $Bx=1/j$ for every Banach limit $B$. Thus,
$$\|\gamma-B\|_{\ell_\infty^*}\ge (\gamma-B)(2T^ix-\emm)=2(1-\frac1j).$$ 
Taking $j\to\infty$, proves the claim.

However, some Banach limits are multiplicative on some subspace of $\ell_\infty$. More precisely, in \cite{Luxemburg} W. A. J. Luxemburg proved that every extreme point of the set $\B$ is multiplicative on the stabiliser of $ac_0$ defined as follows:
$${\rm St_{ac_0}}:=\{z\in \ell_\infty : z\cdot ac_0 \subset ac_0\}.$$
This set was further investigated in \cite{Alekhno, SSU2, ASSU2}. An analogue of this set for Banach limits with additional invariance properties was introduced in \cite{ASSU4}. The paper \cite{Luxemburg} is also interesting for its approach to study Banach limits via non-standard analysis. It is also worth to mention the work \cite{LP} of W. A. J. Luxemburg and B. de Pagter, who introduced the notion of almost convergent elements with respect to an abelian semigroup of Markov operators on AM-spaces.

The Lorentz' result was strengthened by L. Sucheston~\cite{S} as follows:
\begin{thm}\label{sucheston}
 For every $x\in \ell_\infty$ one has
\[
 \{Bx:B\in\B\}=[q(x),p(x)],
\]
where
\[
 q(x)=\lim\limits_{n\to\infty}
 \inf\limits_{m\in\N} \frac1n\sum\limits_{k=m+1}^{m+n}x_k,
 \quad
 p(x)
 =\lim\limits_{n\to\infty}
 \sup\limits_{m\in\N} \frac1n\sum\limits_{k=m+1}^{m+n}x_k.
\]
\end{thm}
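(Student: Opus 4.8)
The plan is to show that $S(x):=\{Bx:B\in\B\}$ is automatically a closed interval, to bound it above by $p(x)$ and below by $q(x)$ using only translation invariance, and then to exhibit Banach limits realising both endpoints; convexity of $S(x)$ will then force $S(x)=[q(x),p(x)]$. As soft input: $\B$ is convex and, being a weak-$*$ closed subset of the unit ball of $\ell_\infty^*$, weak-$*$ compact, and $B\mapsto Bx$ is affine and weak-$*$ continuous, so $S(x)$ is a compact convex subset of $\mathbb R$, i.e.\ a closed bounded interval. I would also check at the outset that the limits defining $p(x)$ and $q(x)$ exist: writing $p_n(x):=\sup_{m\in\N}\frac1n\sum_{k=m+1}^{m+n}x_k$, the sequence $n\mapsto np_n(x)$ is subadditive (split a window of length $n_1+n_2$ into consecutive windows of lengths $n_1,n_2$ and take the supremum over the left endpoint), so $p_n(x)\to\inf_n p_n(x)=:p(x)$ by Fekete's lemma; likewise $q_n(x):=\inf_{m\in\N}\frac1n\sum_{k=m+1}^{m+n}x_k\to q(x)$, and $q(x)\le p(x)$ because $q_n(x)\le p_n(x)$.

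For the upper bound, fix $B\in\B$ and $n\in\N$. Iterating translation invariance gives $B(T^kx)=Bx$ for all $k\ge0$, hence $Bx=B\bigl(\frac1n\sum_{k=0}^{n-1}T^kx\bigr)$. The averaged sequence $y^{(n)}:=\frac1n\sum_{k=0}^{n-1}T^kx$ satisfies $y^{(n)}_m=\frac1n\sum_{j=m-n+1}^{m}x_j$ for all $m\ge n$, so $\limsup_m y^{(n)}_m\le\sup_{l\ge0}\frac1n\sum_{j=l+1}^{l+n}x_j$, a quantity which tends to $p(x)$ as $n\to\infty$ (the same Fekete argument, or note it differs from $p_n(x)$ by $O(1/n)$). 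Since $Bz\le\limsup_m z_m$ for every $z$, we get $Bx=B(y^{(n)})\le\sup_{l\ge0}\frac1n\sum_{j=l+1}^{l+n}x_j$ for each $n$, and letting $n\to\infty$ yields $Bx\le p(x)$; applying this to $-x$ gives $Bx\ge q(x)$. Hence $S(x)\subseteq[q(x),p(x)]$.

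To realise the endpoints I would use Hahn--Banach with the Sucheston functional $z\mapsto p(z)$ itself as the dominating gauge on $\ell_\infty$. One checks that $p$ is positively homogeneous and subadditive (subadditivity of each $p_n$ passes to the limit), that $p(\emm)=1$ and $|p(z)|\le\|z\|_{\ell_\infty}$, and that $p$ kills coboundaries: $\frac1n\sum_{k=m+1}^{m+n}(z-Tz)_k=\frac1n(z_{m+n}-z_m)=O(1/n)$ uniformly in $m$, so $p(\pm(z-Tz))=0$. On the line $\mathbb Rx$ put $B_0(\lambda x):=\lambda p(x)$; since $p(\lambda x)=\lambda p(x)$ for $\lambda\ge0$ while $p(\lambda x)=\lambda q(x)\ge\lambda p(x)$ for $\lambda<0$ (using $q(x)\le p(x)$), we have $B_0\le p$ on $\mathbb Rx$. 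Extend $B_0$ to $B\in\ell_\infty^*$ with $B\le p$ on all of $\ell_\infty$. Then $B\emm\le p(\emm)=1$ and $-B\emm\le p(-\emm)=-1$ give $B\emm=1$; for $z\ge0$, $-Bz=B(-z)\le p(-z)=-q(z)\le0$ gives $Bz\ge0$; and $B(z-Tz)\le p(z-Tz)=0$ together with $B(Tz-z)\le0$ gives $B(Tz)=Bz$. Thus $B\in\B$, and $Bx=B_0(x)=p(x)$, so $p(x)\in S(x)$; applying the same construction to $-x$ puts $q(x)\in S(x)$. Together with the previous paragraph and convexity of $S(x)$ this gives $S(x)=[q(x),p(x)]$.

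The genuinely non-trivial point, which I expect to occupy most of the write-up, is verifying that $p$ is a bona fide sublinear, coboundary-annihilating gauge on all of $\ell_\infty$: existence of $\lim_n p_n(z)$ for every $z$, preservation of subadditivity under that limit, and the elementary window-shift estimates that make $p$ insensitive to cosmetic changes in the definition of $p_n$ (e.g.\ whether the supremum runs over $m\ge0$ or $m\ge1$). Everything else is bookkeeping.
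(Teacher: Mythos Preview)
Your argument is correct. Note, however, that the paper does not actually prove this theorem: it is stated in the preliminaries as a known result and attributed to Sucheston~\cite{S}, so there is no ``paper's own proof'' to compare against. What you have written is precisely the classical route (and essentially Sucheston's original one): verify that $p$ is a translation-invariant sublinear functional via Fekete subadditivity, deduce $q(x)\le Bx\le p(x)$ from $Bx=B\bigl(\frac1n\sum_{k=0}^{n-1}T^kx\bigr)$, and then Hahn--Banach from the line $\mathbb{R}x$ dominated by $p$ to hit the endpoints. All the verifications you flag (existence of $\lim_n p_n$, subadditivity in the limit, $p(\pm(z-Tz))=0$, the $O(1/n)$ discrepancy between $\sup_{m\ge0}$ and $\sup_{m\ge1}$) go through exactly as you indicate, and your check that $B_0\le p$ on $\mathbb{R}x$ for negative scalars via $p(-x)=-q(x)$ is the right way to handle that case.
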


The first result concerning Banach limits with additional invariance properties is due to W.~Eberlein~\cite{Eberlein}, who established the existence
of Banach limits invariant under the Hausdorff transformations.
Eberlein's approach was further developed in~\cite{SS_JFA} to study Banach limits invariant under an operator $H \in \Gamma$. Here, $\Gamma$ stands for the set of all linear operators in $\ell_\infty$,
which satisfy the following conditions:
\begin{enumerate}
\renewcommand{\labelenumi}{(\roman{enumi})}
\item
$H\geq0$ and $H\emm=\emm$;

\item
$Hc_0\subset c_0$;

\item
$\limsup\limits_{j\to\infty}(A(I-T)x)_j\geq0$ for all $x\in
l_\infty$, $A\in
R$, where $R=R(H)={\rm conv}\{H^n,n\in\mathbb N\cup\{0\}\}$.
\end{enumerate}


Denote by $ \mathfrak B(H)$ the set of all Banach limits invariant with respect to the operator $H$.

For every $m \in \mathbb{N}$ define a dilation operator $\sigma_m : \ell_\infty \to \ell_\infty$ as follows:
\begin{equation}\label{26}
\sigma_m(x_1,x_2,\ldots)=(\underbrace{x_1,x_1,\ldots,x_1}_m, \underbrace{x_2,x_2,\ldots,x_2}_m, \ldots).
\end{equation}

Denote by $C$ the Ces\`aro operator on $\ell_\infty$ given by
$$(Cx)_n=\frac1n \sum_{k=1}^n x_k, \ n\ge1.$$

It can be shown that $C, \sigma_m\in \Gamma$ for every $m\in\N$ (see discussion after Theorem 8 in \cite{ASSU2} and \cite{SS_JFA}) and, so the sets $\mathfrak B(\sigma_m)$ and $\mathfrak B(C)$ are non-empty.
 The sets $\mathfrak B(\sigma_m)$ were studied in \cite{DPSSS2, ASSU2, SSUZ2}. In the present paper we investigate the set $\cap_{m\ge 2} \mathcal{B}(\sigma_m)$.
 We also discuss properties of the set $\ext \B$ of all extreme points of the convex set $\mathfrak{B}$.

\section{Relation to Ces\`aro invariant Banach limits}

It was shown in \cite[Theorem 3]{SSUZ2} that for every $m\in \N$ the following inclusion holds $\mathfrak B(C) \subset \mathfrak B(\sigma_m)$.  Hence,
\begin{equation}\label{eq_main}
\mathfrak B(C) \subset  \bigcap_{m=2}^\infty\mathfrak B(\sigma_m).
\end{equation}

In \cite[Theorem 4.8]{ASSU4} it was shown that the inclusion $\mathfrak B(C) \subset \mathfrak B(\sigma_m)$ is also proper.
In this section we show that the inclusion \eqref{eq_main} is proper.

Set $\Sigma:=\bigcap_{m=2}^\infty\mathfrak B(\sigma_m).$
 For a given $x\in \ell_\infty$ denote
$$\mathfrak B(C, x)=\{B(x) : B\in \mathfrak B(C)\} \ \text{and} \ \mathfrak B(\Sigma, x)=\{B(x) : B\in \Sigma\}.$$


\begin{thm}
There exists $B\in \Sigma$ such that $B\notin \mathfrak B(C).$
\end{thm}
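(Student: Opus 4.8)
The plan is to exhibit a single bounded sequence $x$ and a number $a$ with the properties that (i) $a$ is attained as $B(x)$ for some $B\in\Sigma$, but (ii) $a$ is \emph{not} attained as $B(x)$ for any $B\in\mathfrak B(C)$. Since $\mathfrak B(C,x)$ and $\mathfrak B(\Sigma,x)$ are both closed subintervals of $[\liminf x_n,\limsup x_n]$ (by convexity and weak-* compactness of the relevant sets of Banach limits, exactly as in Theorem~\ref{sucheston}), it suffices to show that the interval $\mathfrak B(\Sigma,x)$ is \emph{strictly larger} than $\mathfrak B(C,x)$ for a well-chosen $x$. In view of the inclusion \eqref{eq_main}, we automatically have $\mathfrak B(C,x)\subseteq\mathfrak B(\Sigma,x)$, so the whole task reduces to producing $x$ for which the containment of these intervals is proper, and then picking $B\in\Sigma$ with $B(x)$ lying in $\mathfrak B(\Sigma,x)\setminus\mathfrak B(C,x)$.

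The natural candidate for $x$ is a sequence built from long alternating blocks on a multiplicatively sparse scale — say constant value $1$ on a block of indices, then constant value $0$ on a much longer block, then $1$ again, with block lengths chosen so that dilations $\sigma_m$ essentially permute the block structure (because $\sigma_m$ rescales indices by the factor $m$), while the Ces\`aro averages $(Cx)_n$ oscillate in a rigidly controlled way. First I would compute, for this $x$, the Sucheston-type quantities that describe $\mathfrak B(\Sigma,x)$: because every $B\in\Sigma$ is $\sigma_m$-invariant for all $m\ge2$, the values $B(x)$ are constrained by the Lorentz/Sucheston averages of $x$ together with the extra averaging coming from the semigroup generated by the $\sigma_m$'s. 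Then I would compute the analogous quantities for $\mathfrak B(C,x)$: membership in $\mathfrak B(C)$ forces $B$ to be invariant under $C$, hence under all convex combinations of powers of $C$, and the key point is that iterating $C$ ``smooths out'' the block oscillations of this particular $x$ far more aggressively than the dilations do, pinning $B(x)$ to a single value (or a strictly smaller interval). If $x$ is arranged so that $C x$, $C^2 x,\dots$ converge in the Ces\`aro-Lorentz sense to one fixed number while the $\sigma_m$-orbit leaves genuine freedom, the two intervals come out different.

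Concretely, the steps in order: (1) define the block sequence $x$ explicitly, with block endpoints on a lacunary scale adapted simultaneously to all the factors $m\ge 2$ (powers of a fixed base, or factorials, so that each $\sigma_m$ maps the block partition into a coarsening of itself); (2) using Theorem~\ref{lorentz} and the structure of $x$, show that $q(x)<p(x)$ and, more importantly, that after applying any finite composition/convex combination from $R(\sigma_m)$ the oscillation of the Ces\`aro averages does not collapse — this gives a nondegenerate interval $\mathfrak B(\Sigma,x)=[\alpha,\beta]$; (3) show that for the Ces\`aro operator, $\limsup_n (C^k x)_n$ and $\liminf_n (C^k x)_n$ both tend to a common limit $\gamma$ as $k\to\infty$ (or more carefully, that $q(Cx)=p(Cx)$, equivalently $Cx\in ac$), so that condition (iii) in the definition of $\Gamma$, combined with $C$-invariance, forces $B(x)=B(Cx)=\gamma$ for every $B\in\mathfrak B(C)$; hence $\mathfrak B(C,x)=\{\gamma\}$ is a single point; (4) observe $\gamma\in[\alpha,\beta]$ but $[\alpha,\beta]\ne\{\gamma\}$, pick any $B\in\Sigma$ with $B(x)\ne\gamma$ (such $B$ exists by Sucheston's theorem applied inside the $\Sigma$ setting, or by a Hahn--Banach extension argument building a $\sigma_m$-invariant positive normalized translation-invariant functional achieving the value $\alpha$ or $\beta$), and conclude $B\notin\mathfrak B(C)$.

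The main obstacle I anticipate is step (3) together with the construction in step (1): one must choose the block lengths so delicately that a \emph{single} scale works for the Ces\`aro operator (forcing $Cx$, or some $C^k x$, to be almost convergent — this requires the ``$1$''-blocks to have density tending to a limit along the Ces\`aro averaging, uniformly in the starting point) while \emph{simultaneously} the same scale is rich enough that the dilation semigroup does not wash out the oscillation. Reconciling these two competing demands — Ces\`aro wants fast-shrinking relative block sizes, dilation-invariance wants the block ratios to be compatible with every integer dilation factor — is the delicate heart of the argument; I would expect to need a scale like $n_j = (j!)$ or $n_j = 2^{2^j}$ and a careful double-counting of how many block-boundaries fall in a window $[m+1,m+n]$, verifying the uniform-in-$m$ Ces\`aro limit of Theorem~\ref{lorentz} for $Cx$ by hand. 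Once $Cx\in ac$ is established, the rest (that $\mathfrak B(C,x)$ is a single point, that $\mathfrak B(\Sigma,x)$ is a genuine interval, and the final selection of $B$) is comparatively routine.
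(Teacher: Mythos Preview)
Your overall strategy---find $x\in\ell_\infty$ for which $\mathfrak B(C,x)$ is a single point while $\mathfrak B(\Sigma,x)$ is a genuine interval, then pick $B\in\Sigma$ realizing a value outside $\mathfrak B(C,x)$---is exactly what the paper does, and your reduction via \eqref{eq_main} and the interval description is sound.  The gap is in the construction.  With an alternating-block sequence of the type you describe, the two requirements in your steps~(2) and~(3) pull against each other in a way the lacunary scales $2^{2^j}$ or $j!$ alone do not fix.  To get $\sup\mathfrak B(\Sigma,x)=1$ you need, for every finite convex combination $A=\sum_{k\le m}\lambda_k\sigma_k$, indices $j$ with $(\sigma_k x)_j=x_{\lceil j/k\rceil}=1$ for all $k\le m$; with a single $1$-block $[a,b)$ this forces $b/a>m$, so the $1$-blocks must have multiplicative ratio tending to infinity.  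But then on such a block $(Cx)_n$ runs from near $0$ up to $1-a/b\to 1$, and on the subsequent $0$-block it decays back to $0$; so $Cx$ oscillates between $0$ and $1$, is not convergent, and (by the same windowed averages) is not almost convergent either.  Iterating $C$ does not help: the same calculation shows $(C^kx)$ still oscillates with amplitude bounded away from $0$.  So the ``delicate heart'' you flag is not merely delicate---it is an actual obstruction for single-interval $1$-blocks.

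The paper escapes this by abandoning the single-block picture.  It fixes short target windows $J_n=[2^{2^n}-n,2^{2^n})$ and sets $x=1$ on the union $I_n=\bigcup_{k=1}^n J_{n,k}$, where $J_{n,k}$ is the (minimal) $\sigma_k$-preimage of $J_n$.  Two things follow at once: $|I_n|<n(n+2)$, so the support of $x$ is so sparse that $(Cx)_j\to 0$ outright (no almost-convergence argument needed, and $\mathfrak B(C,x)=\{0\}$ trivially); and for every $A\in{\rm conv}\{\sigma_1,\dots,\sigma_m\}$ and every $n\ge m$ one has $Ax|_{J_n}=1$ by construction, which after a short estimate on the $(I-T)\ell_\infty$ perturbation gives $\limsup_j(A(x+s))_j=1$ and hence $\sup\mathfrak B(\Sigma,x)=1$ via the interval formula \eqref{c2}.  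The missing idea in your plan is precisely this ``plant $1$'s on all preimages of a short target'' trick: it decouples the Ces\`aro sparsity (total mass is tiny) from the dilation requirement (every $\sigma_k$ with $k\le n$ hits the same $J_n$), whereas a single contiguous $1$-block forces you to trade one off against the other.
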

  
\begin{proof}
It was proved in~\cite[Theorem 7]{ASSU2} that:
\begin{equation}\label{c2}
\mathfrak B(\Sigma, x)=\left[\sup_{A\in R(\Sigma), s\in S} \liminf_{j\to\infty} (A(x+s))_j, \inf_{A\in R(\Sigma), s\in S} \limsup_{j\to\infty} (A(x+s))_j  \right],
\end{equation}
where $S:=(I-T)\ell_\infty$ and $R(\Sigma)$ is a convex hull of all finite products of operators $\sigma_m$. Note that for every $n\in\N$ and $k_1, ..., k_n \in \N$ we have $\sigma_{k_1}\sigma_{k_2}\cdots \sigma_{k_n}=\sigma_{k_1 k_2\cdots k_n}$. Thus, $R(\Sigma)$ is nothing but $ {\rm conv}\{\sigma_n, n\in\N\}$.

It is easy to see, that for every $B\in \mathfrak B(C)$ and every $x\in \ell_\infty$ such that $\lim_{j\to\infty} (Cx)_j=0$ one has $Bx=B(Cx)=0.$

Taking these facts into account, in order to prove the assertion it is sufficient to construct $x\in \ell_\infty$ such that
\begin{equation}\label{c4}
 \lim_{j\to\infty} (Cx)_j =0
\end{equation}
and
$$\limsup_{j\to\infty} (A(x+s))_j =1 $$
for every $A\in {\rm conv}\{\sigma_n, n\in\N\}$ and $s\in S$.

For a given $n\in \N$ denote $J_n = [2^{2^n}-n, 2^{2^n})$ and let $J_{n,k}$ be a minimal by inclusion subset of $\N$ such that $\sigma_k J_{n,k} \supset J_n$ for all $1\le k\le n$. Set $I_n = \cup_{k=1}^n J_{n,k}$. Since
$$|J_{n,k}|\le \frac1k |J_n|+2=\frac{n}{k}+2,$$
it follows that
$$|I_n|\le \sum_{k=1}^n (\frac{n}{k}+2)= n \sum_{k=1}^n \frac{1}{k}+2n<n(n+2)$$
and $(2^{2^{n-1}}, 2^{2^n}] \supset I_n$ for all $n\in \N$. Consider the sequence
$$x_m=
\begin{cases}
1, \  m\in \bigcup_{k=1}^\infty I_{k}\\
0, \ \text{otherwise}.
\end{cases}$$

If $j\in (2^{2^{n-1}}, 2^{2^n}]$ for some $n\in \N$, then 
$$(Cx)_j \le 2^{-2^{n-1}}\sum_{k=1}^n |I_k| < 2^{-2^{n-1}}\sum_{k=1}^n k(k+2)$$
and, thus,
$$\lim_{j\to\infty} (Cx)_j=0.$$

Let now $A\in {\rm conv}\{\sigma_n, 1\le n\le m\}$ for some $m\in \N$ and $s=(I-T)y$ for some $y\in \ell_\infty$. We have
$$\left| \sum_{i=k+1}^{k+r} s_i \right| = |y_{k+1}-y_{k+r}|\le 2 \|y\|_{\ell_\infty}$$
for every $k, r \in \N$. Hence,
$$\left| \sum_{i=k+1}^{k+r} (\sigma_n s)_i \right| \le 2 n \|y\|_{\ell_\infty}\le 2 m \|y\|_{\ell_\infty}$$
for every $n=1,2,\dots, m$. Since $A\in {\rm conv}\{\sigma_n, 1\le n\le m\}$, it follows that
$$\left| \sum_{i=k+1}^{k+r} (A s)_i \right| \le \max_{1\le n\le m}\left| \sum_{i=k+1}^{k+r} (\sigma_n s)_i \right| \le 2 m \|y\|_{\ell_\infty}$$
Therefore,
$$\max_{k<i\le k+r} (As)_i  \ge - \frac{2 m \|y\|_{\ell_\infty}}{r}$$
for every $k, r \in \N$. Since the above inequality holds for arbitrary large $r$, it follows that
\begin{equation}\label{c5}
\max_{k<i\le k+r} (As)_i  \ge 0.
\end{equation}

By construction $\sigma_kx\vert_{J_n}=1$ for all $1\le n\le m$. Hence, $Ax\vert_{J_n}=1$. Next if $n\ge 4 m \|y\|_{\ell_\infty},$ then by \eqref{c5}
$$\max_{j\in J_n} (A(x+s))_j  \ge 1- 0=1. $$
This means that 
$$\inf_{A\in R(\Sigma), s\in S} \limsup_{j\to\infty} (A(x+s))_j \ge 1. $$ The converse inequality is straightforward. 
This proves the assertion.
\end{proof}

\section{Distance to extreme points}

Denote by $\ext\B$ the set of extreme points of $\B$. 
Since $\B$ is compact in the weak$^*$ topology, it follows from Krein-Milman theorem that
\[
 \B=\overline{\rm conv} \ \ext\B,
\]
where the closure is taken in the weak$^*$ topology. 
The classical and recent studies of the set of extreme points of $\B$ demonstrate that this set has very interesting properties~\cite{Alekhno, Luxemburg, Nillsen, SS_pos, SSU2, Talagrand}.
In particular, it was shown in~\cite{SS_pos} that a subspace generated by any countable sequence in $\ext \B$ is isometric to
$l_1$ . Hence, $\|B_1-B_2\|=2$ for every $B_1,B_2\in
ext\,\B$, $B_1\not=B_2$.

It was shown in \cite[Theorem 14]{ASSU2} that the sets $\B(\sigma_n)$, $n\in \N$, $n\ge2$ and $\ext \B$ are on the maximal distance from each other.
%
%
%
%
This fact can be given in the stronger form (see Corollary \ref{cor6} below). We need some preparations to state the result. The kernel $N(I - T^*)$ of an operator $I - T^*$ is a Riesz subspace in the Banach lattice $\ell_\infty^*$ (see e.g.,~\cite{Alekhno}). So, it is an $AL$-subspace under the norm and the order induced from $\ell_\infty^*$. Clearly, the positive part of the unit sphere $S^+_{N(I - T^*)}$ coincide with the set $\B$. 

As an $AL$-space $N(I - T^*)$ is isometrically order isomorphic to the space $L_1(\Omega)$ of all functions integrable on some set $\Omega$ with a measure~$\mu$.
The set
$\Omega$ can be partitioned into $\Omega_d$ and $\Omega_c$ such that the restriction of $\mu$ to $\Omega_d$
is discrete and the restriction of $\mu$ to $\Omega_c$ is continuous.
That is,
\begin{equation*}
N(I - T^*) \approx L_1(\Omega) =
L_1(\Omega_d)\oplus L_1(\Omega_c).
\end{equation*}
 Atoms of
$\Omega_d$ correspond to ${\rm ext\! \
}\mathfrak{B}$. So, $L_1(\Omega_d)$ is isometrically isomorphic to the closure of a hull of ${\rm ext\! \ }\mathfrak{B}$ in the norm topology.
In particular, the closure of 
${\rm conv}({\rm ext\! \ }\mathfrak{B})$ in the norm topology coincide with the positive part $S^+_{L_1(\Omega_d)}$ of a unit sphere of the space
$L_1(\Omega_d)$. 
We refer to \cite{ASSU2} for more details.

Denote $\B_d := S^+_{L_1(\Omega_d)}$ and $\B_c := S^+_{L_1(\Omega_c)}$. Since components $L_1(\Omega_d)$ and $L_1(\Omega_c)$ are disjoint in $N(I - T^*)$, it follows that $\|B_1-B_2\|_{\ell_\infty^*}=2$ for every $B_1\in\B_d$ and $B_2\in\B_c$.

\begin{thm}\label{T5}
For every $n\in \N$, $n\ge2$ one has  $\B(\sigma_n)\subset \B_c.$
\end{thm}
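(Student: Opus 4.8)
The plan is to show that no Banach limit $B \in \mathfrak{B}(\sigma_n)$ can have a nonzero component in $L_1(\Omega_d)$; equivalently, that every $B \in \mathfrak{B}(\sigma_n)$ lies at distance $2$ from each extreme point of $\mathfrak{B}$, so that its support in the $AL$-space decomposition $N(I-T^*) \approx L_1(\Omega_d) \oplus L_1(\Omega_c)$ is disjoint from every atom. Since $L_1(\Omega_d)$ is the norm closure of the span of $\ext\mathfrak{B}$, and its positive part of the unit sphere is $\overline{\mathrm{conv}}\,(\ext\mathfrak{B})$ (norm closure), it suffices to prove that $\mathrm{dist}_{\ell_\infty^*}(B, \overline{\mathrm{conv}}\,\ext\mathfrak{B}) = 2$ for every $B \in \mathfrak{B}(\sigma_n)$; then the positive normalized part of $B$ cannot meet $\B_d$, forcing $B \in \B_c$. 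Actually the cleanest route is to invoke \cite[Theorem 14]{ASSU2} (quoted above), which already gives $\mathrm{dist}(\mathfrak{B}(\sigma_n), \ext\mathfrak{B}) = 2$, and upgrade it: because $\ext\mathfrak{B}$ spans $L_1(\Omega_d)$ isometrically as $\ell_1$, being at distance $2$ from \emph{every} extreme point is the same as being at distance $2$ from the whole positive sphere $\B_d$, which by the disjointness of the bands $L_1(\Omega_d)$ and $L_1(\Omega_c)$ means $B$ has trivial $\Omega_d$-component, i.e. $B \in S^+_{L_1(\Omega_c)} = \B_c$.

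The key steps, in order, are as follows. First I would fix $B \in \mathfrak{B}(\sigma_n)$ and write its decomposition $B = B_d + B_c$ with $B_d \in L_1(\Omega_d)_+$, $B_c \in L_1(\Omega_c)_+$, $\|B_d\| + \|B_c\| = 1$. Second, I would argue that $B_d$ is itself, after normalization, an element of $\overline{\mathrm{conv}}\,\ext\mathfrak{B}$ (this is exactly the identification of $\B_d$ recalled just before the theorem). Third — the crux — I would show $\|B_d\| = 0$. For this, suppose $\|B_d\| = \alpha > 0$; then $B_d/\alpha$ is a norm limit of convex combinations of extreme points, so there is an extreme point $E$ (or a convex combination of them) with $\|B_d/\alpha - E\|$ small, say $< \varepsilon$. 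Using the explicit test functionals from \cite[Theorem 14]{ASSU2} — sequences $x \in \ell_\infty$ on which every $\sigma_n$-invariant Banach limit takes a value far from $E(x)$ — one derives a contradiction: such an $x$ witnesses $|B(x) - E(x)|$ close to $2$, but $B(x) = B_d(x) + B_c(x)$ and the $B_c$ part contributes at most $\|B_c\| < 1$ in the wrong direction, so one can only conclude $\alpha$ is small, and letting $\varepsilon \to 0$ pins $\alpha = 0$. Alternatively, and more robustly, I would use the band-disjointness directly: since $\|B_1 - B_2\| = 2$ for all $B_1 \in \B_d$, $B_2 \in \B_c$ (stated in the excerpt), and Theorem 14 of \cite{ASSU2} gives a point of $\B(\sigma_n)$ — indeed all of $\B(\sigma_n)$ — at distance $2$ from $\B_d$, the $AL$-structure forces $\mathfrak{B}(\sigma_n) \cap \B_d = \emptyset$, and since $\mathfrak{B}(\sigma_n) \subset \mathfrak{B} = \B_d \cup \B_c$ (as sets inside the positive sphere, with every Banach limit a convex combination of a $\B_d$-part and a $\B_c$-part), one needs the sharper claim that the whole functional, not just a part, sits in $\B_c$.

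To make the last point airtight, here is the mechanism I expect to use. For $B \in \B$ write $B = \alpha B' + (1-\alpha) B''$ with $B' \in \B_d$, $B'' \in \B_c$ (when $\alpha \in (0,1)$; the degenerate cases are immediate). If $B \in \mathfrak{B}(\sigma_n)$, then applying $\sigma_n^* - I$ and using that the projections onto the two bands commute with the lattice operations — hence, I claim, with $\sigma_n^*$ restricted to $N(I-T^*)$, since $\sigma_n^*$ maps $N(I-T^*)$ into itself and is a positive operator, so it respects bands — one gets that $B'$ and $B''$ are \emph{separately} $\sigma_n$-invariant, i.e. $B' \in \mathfrak{B}(\sigma_n) \cap \B_d$ and $B'' \in \mathfrak{B}(\sigma_n) \cap \B_c$. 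But $\mathfrak{B}(\sigma_n) \cap \B_d = \emptyset$ by \cite[Theorem 14]{ASSU2} (anything in $\B_d$ is a norm-limit of convex combinations of extreme points, hence at distance $2$ from any $\sigma_n$-invariant Banach limit, hence cannot itself be one), so $\alpha = 0$ and $B = B'' \in \B_c$. The main obstacle, which I would treat carefully, is the verification that $\sigma_n^*$ preserves the band decomposition $L_1(\Omega_d) \oplus L_1(\Omega_c)$ of $N(I-T^*)$: this requires knowing that $\sigma_n^*$ maps $N(I-T^*)$ to itself (true because $\sigma_n$ commutes with $T$ up to the relation $\sigma_n T = T^n \sigma_n$, so one checks $\sigma_n^*$ sends $T^*$-invariant functionals to $T^*$-invariant functionals after the appropriate averaging — this is implicit in $\sigma_n \in \Gamma$ and the construction of $\mathfrak{B}(\sigma_n)$), together with the fact that a positive operator on an $AL$-space maps the discrete band into the discrete band — which may actually \emph{fail} in general, so the fallback is the direct distance argument via the explicit test sequences of \cite[Theorem 14]{ASSU2}, showing every $B \in \mathfrak{B}(\sigma_n)$ is at distance $2$ from all of $\overline{\mathrm{conv}}\,\ext\mathfrak{B} = \B_d$ and concluding $B \in \B_c$ from the disjointness of the positive spheres $\B_d$ and $\B_c$ already recorded in the paragraph preceding the statement.
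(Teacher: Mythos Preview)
Your overall strategy --- invoke \cite[Theorem 14]{ASSU2} to get $\|B-E\|=2$ for every $B\in\B(\sigma_n)$ and every $E\in\ext\B$, then use the $AL$-space decomposition $N(I-T^*)\approx L_1(\Omega_d)\oplus L_1(\Omega_c)$ to conclude that $B$ has no atomic part --- is exactly the paper's strategy. What differs is the mechanism for the last step. The paper does not argue via ``distance $2$ from each extreme point $\Rightarrow$ distance $2$ from all of $\B_d$'' or via band-preservation of $\sigma_n^*$; instead it fixes $B_1\in\B(\sigma_n)$ and $B_2\in\ext\B$, observes that $g(t):=\|B_1-tB_2\|$ is convex, bounded above by $1+t$, and equals $2$ at $t=1$, whence $g(t)=1+t$ for all $t\geq0$. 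If some $B_3\in\B(\sigma_n)$ had a decomposition $B_3=\sum_{i\geq4}\alpha_iB_i$ with an extreme component $B_5$ carrying weight $\alpha_5>0$, then $\|B_3-\alpha_5 B_5\|=\|\sum_{i\neq5}\alpha_iB_i\|\leq 1-\alpha_5<1$, contradicting $g(\alpha_5)=1+\alpha_5$.

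Your ``cleanest route'' is in fact valid and arguably more transparent than the paper's: in an $AL$-space, two positive norm-one elements $f,g$ satisfy $\|f-g\|=2$ if and only if $f\perp g$, so $\|B-E\|=2$ for every atom $E$ forces $B$ to have zero mass on every atom, i.e.\ $B\in L_1(\Omega_c)$. You should state this $AL$-space fact explicitly and discard the detours through band-preservation of $\sigma_n^*$ (which, as you suspect, need not hold and is in any case unnecessary) and through ``test sequences'' (which would require a single witness working simultaneously against a convex combination of extreme points --- not what Theorem~14 provides). With that one clarification your argument is complete and slightly shorter than the paper's, while the paper's convexity device has the minor advantage of avoiding any explicit appeal to lattice disjointness.
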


\begin{proof}
For $B_1 \in \B(\sigma_n)$ and $B_2 \in \ext \B$ consider the function 
$$g(t):=\|B_1-tB_2\|_{\ell_\infty^*}, \ t\ge 0.$$
It is easy to see that $g$ is a convex function and $g(t)\le 1+t$ for all $t\ge0$. From \cite[Theorem 14]{ASSU2} we obtain that $g(1)=2$. These three facts imply that $g(t)= 1+t$ for all $t\ge0$, that is
\begin{equation}\label{eq1}
\|B_1-tB_2\|_{\ell_\infty^*} = 1+t\ge 1, \ t\ge 0.
\end{equation}

Suppose to the contrary, that $\B(\sigma_n)$ is not a subset of $\B_c.$ Hence, there exist $B_3, B_4 \in \B(\sigma_n)$, $B_i\in \ext \B$, $5\le i\le M\le \infty$ and numbers $\alpha_4\ge0$, $\alpha_i>0$ for all $5\le i\le M\le \infty$ such that 
$$B_3=\sum_{i=4}^M \alpha_i B_i \quad \text{and} \quad \sum_{i=4}^M \alpha_i=1.$$

We have 
$$\|B_3-\alpha_5B_5\|_{\ell_\infty^*}= \|\sum_{\substack{i=4 \\ i\neq 5}}^M \alpha_i B_i\|_{\ell_\infty^*}= \sum_{\substack{i=4 \\ i\neq 5}}^M \alpha_i <1,$$
which contradicts \eqref{eq1}. Therefore, $\B(\sigma_n)\subset \B_c.$
\end{proof}

\begin{cor}\label{cor6}
For every $n\in \N$, $n\ge2$ and every $B_1\in\B(\sigma_n)$, $B_2\in\overline{\rm conv}^n\ext \B$ (the closure is taken in the norm topology) one has $\|B_1-B_2\|_{\ell_\infty}=2.$
\end{cor}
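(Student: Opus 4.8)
The plan is to reduce Corollary~\ref{cor6} to Theorem~\ref{T5} together with the $AL$-space geometry already set up in the excerpt. Recall that $\overline{\rm conv}^{\,n}\ext\B$ denotes the norm-closure of $\operatorname{conv}(\ext\B)$, which by the discussion preceding Theorem~\ref{T5} coincides with $\B_d=S^+_{L_1(\Omega_d)}$. On the other hand Theorem~\ref{T5} gives $\B(\sigma_n)\subset\B_c=S^+_{L_1(\Omega_c)}$. Since the bands $L_1(\Omega_d)$ and $L_1(\Omega_c)$ are disjoint complementary bands in the $AL$-space $N(I-T^*)\cong L_1(\Omega)$, any $B_1\in\B_c$ and $B_2\in\B_d$ have disjoint supports in $L_1(\Omega)$, hence $\|B_1-B_2\|_{\ell_\infty^*}=\|B_1\|+\|B_2\|=1+1=2$. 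This is exactly the remark made just before Theorem~\ref{T5}: ``$\|B_1-B_2\|_{\ell_\infty^*}=2$ for every $B_1\in\B_d$ and $B_2\in\B_c$.'' So the corollary is essentially immediate once Theorem~\ref{T5} is in hand.

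Concretely, I would write: fix $n\ge2$, $B_1\in\B(\sigma_n)$ and $B_2\in\overline{\rm conv}^{\,n}\ext\B$. By Theorem~\ref{T5}, $B_1\in\B_c$. By the structural description of $\overline{\rm conv}^{\,n}\ext\B$ given above (the norm-closure of the convex hull of the extreme points equals $\B_d$), we have $B_2\in\B_d$. Since $L_1(\Omega_c)$ and $L_1(\Omega_d)$ are disjoint components of $N(I-T^*)$, the functionals $B_1$ and $B_2$ are disjoint elements of the $AL$-space, so $\|B_1-B_2\|_{\ell_\infty^*}=\|B_1\|_{\ell_\infty^*}+\|B_2\|_{\ell_\infty^*}$. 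Both norms equal $1$ because $B_1,B_2$ lie on the unit sphere (indeed $B_i\id=1$ and $B_i\ge0$, so $\|B_i\|_{\ell_\infty^*}=B_i\id=1$). Hence $\|B_1-B_2\|_{\ell_\infty^*}=2$, and since the reverse inequality $\|B_1-B_2\|_{\ell_\infty^*}\le2$ always holds on $\B$, we get equality.

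I do not anticipate a genuine obstacle here; the only points requiring a modicum of care are bookkeeping ones. First, one must be sure that ``$\overline{\rm conv}^{\,n}\ext\B$'' in the statement really refers to the norm-closure of $\operatorname{conv}(\ext\B)$ and that this set equals $\B_d$ — this is asserted verbatim in the paragraph preceding Theorem~\ref{T5} (``the closure of $\operatorname{conv}(\ext\B)$ in the norm topology coincide with the positive part $S^+_{L_1(\Omega_d)}$''), so it may simply be cited. Second, one should state explicitly why disjointness of the bands forces additivity of the norm on differences: this is the defining $L$-property of an $AL$-space, namely $\|u-v\|=\|u\|+\|v\|$ whenever $u\wedge v=0$ with $u,v\ge0$; here $u=B_1$, $v=B_2$ are positive and live in disjoint bands, hence $u\wedge v=0$. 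If one prefers not to invoke Theorem~\ref{T5} as a black box, one can instead recycle the computation in its proof verbatim — the convexity-of-$g$ argument shows $\|B_1-tB_2\|_{\ell_\infty^*}=1+t$ for every extreme $B_2$ and every $t\ge0$, and then a Riesz-decomposition / limiting argument over convex combinations of extreme points extends this to all $B_2\in\overline{\rm conv}^{\,n}\ext\B$ at $t=1$ — but routing through Theorem~\ref{T5} and the $AL$-decomposition is cleaner.

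Thus the corollary is a one-line consequence: Theorem~\ref{T5} places $\B(\sigma_n)$ in the band $\B_c$, the quoted structure theorem places $\overline{\rm conv}^{\,n}\ext\B$ in the complementary band $\B_d$, and disjointness of the two bands in the $AL$-space $N(I-T^*)$ yields distance exactly $2$.
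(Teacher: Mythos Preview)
Your proof is correct and follows exactly the same route as the paper: invoke Theorem~\ref{T5} to place $B_1$ in $\B_c$, use the structural description preceding Theorem~\ref{T5} to place $B_2$ in $\B_d$, and then cite the disjointness of the two bands in the $AL$-space $N(I-T^*)$ to conclude $\|B_1-B_2\|_{\ell_\infty^*}=2$. The paper's own proof is even terser---it simply says the assertion follows from Theorem~\ref{T5} and the preceding discussion---so your write-up is, if anything, a more detailed version of the same argument.
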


\begin{proof}
By Theorem \ref{T5}, $\B(\sigma_n)\subset \B_c$. The assertion follows from the discussion above Theorem \ref{T5}.
\end{proof}

%
%
%
%

\section{The special subclass of Banach limits}

Let $\gamma:\ell_{\infty}\to\mathbb{C}$ be an extended limit, that is a positive linear functional such that $\gamma(x)=\lim_{n\to\infty} x_n$ for every convergent $x\in\ell_\infty$. Consider the functional
$$Bx=\log(2)\cdot\gamma\Big(\frac1n\sum_{k\geq0}x_k 2^{-k/n}\Big), \ x\in \ell_\infty.$$ 
It was proved in \cite[Lemma 3.5]{SUZ4} that for every extended limit $\gamma$ the functional $B$ is a Banach limit. Denote the set of all such Banach limits by $\mathfrak B_\zeta$.
These are exactly the Banach limits which correspond (in the sense of A. Pietsch \cite{SSUZ}) to the extended $\zeta$-function residues (see \cite{SUZ4} and references therein).

\begin{thm}\label{B_ext}
The set $\mathfrak B_\zeta$ is disjoint with the set $\ext \B$.
\end{thm}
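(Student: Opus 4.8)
The plan is to write an arbitrary $B\in\mathfrak B_\zeta$ as a \emph{continuous convex mixture} of Banach limits and then to exclude extremality directly. Fix an extended limit $\gamma$ with $Bx=\log(2)\,\gamma\big((\frac1n\sum_{k\ge0}x_k2^{-k/n})_n\big)$. Starting from the identity $2^{-k/n}=\int_0^\infty(\log2)\,2^{-s}\,\mathbf 1\{k<ns\}\,ds$ and Tonelli's theorem one obtains
\[
\frac1n\sum_{k\ge0}x_k2^{-k/n}=\int_0^\infty(\log2)\,s\,2^{-s}\,v_n(s)\,ds,\qquad v_n(s):=\frac1{ns}\sum_{0\le k<ns}x_k ,
\]
where $|v_n(s)|\le2\|x\|_{\ell_\infty}$ for all $n\in\N$, $s>0$. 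Put $M_s(x):=\gamma\big((v_n(s))_n\big)$. Since $v_n(s)$ equals $(Cx)_{\lfloor ns\rfloor}$ modulo a null sequence in $n$, each $M_s$ is positive, satisfies $M_s\emm=1$ and is translation invariant (because $CTx-Cx\in c_0$), i.e. $M_s\in\mathfrak B$; moreover $s\mapsto M_s(x)$ is continuous, since $|(Cx)_{N+1}-(Cx)_N|\le2\|x\|_{\ell_\infty}/(N+1)$ gives $|M_{s'}(x)-M_s(x)|\le2\|x\|_{\ell_\infty}|\log(s'/s)|$. Applying $\gamma$ and interchanging it with the integral in $s$ yields
\[
Bx=\int_0^\infty\rho(s)\,M_s(x)\,ds,\qquad\rho(s):=(\log2)^2\,s\,2^{-s},\qquad\int_0^\infty\rho(s)\,ds=1 .
\]

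Now assume $B\in\ext\mathfrak B$. First, $s\mapsto M_s$ must be constant: otherwise choose $x_0$ with $s\mapsto M_s(x_0)$ non-constant and set $c:=Bx_0=\int_0^\infty\rho(s)M_s(x_0)\,ds$; then, after possibly replacing $x_0$ by $-x_0$, the set $E:=\{s>0:M_s(x_0)>c\}$ is open, nonempty and $\lambda:=\int_E\rho\in(0,1)$, so $B_1:=\lambda^{-1}\int_E\rho(s)M_s(\cdot)\,ds$ and $B_2:=(1-\lambda)^{-1}\int_{E^c}\rho(s)M_s(\cdot)\,ds$ lie in $\mathfrak B$, satisfy $B=\lambda B_1+(1-\lambda)B_2$, and $B_1x_0>c=Bx_0$, so $B_1\ne B$, contradicting extremality. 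Hence $M_s=M_1$ for all $s>0$, so $B=M_1=\gamma\circ C$. For $n\ge2$ one has $(C\sigma_n x)_m=(Cx)_{\lfloor m/n\rfloor}+O(n\|x\|_{\ell_\infty}/m)$, whence $C\sigma_n x-\big((Cx)_{\lfloor m/n\rfloor}\big)_m\in c_0$ and therefore
\[
B(\sigma_n x)=\gamma(C\sigma_n x)=\gamma\big(((Cx)_{\lfloor m/n\rfloor})_m\big)=M_{1/n}(x)=M_1(x)=Bx .
\]
Thus $B\in\mathfrak B(\sigma_n)$, which by Theorem~\ref{T5} forces $B\in\B_c$; as $\ext\mathfrak B\subset\B_d$ and $\B_c\cap\B_d=\varnothing$, this contradicts $B\in\ext\mathfrak B$ (alternatively, $B\in\mathfrak B(\sigma_n)\cap\ext\mathfrak B$ is already incompatible with \cite[Theorem~14]{ASSU2}). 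Hence $\mathfrak B_\zeta\cap\ext\mathfrak B=\varnothing$.

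The main point to get right is the interchange of $\gamma$ with the $s$-integral. The Riemann-sum error in approximating $\int_0^\infty\rho(s)v_n(s)\,ds$ by $\sum_j\big(\int_{a_{j-1}}^{a_j}\rho\big)v_n(s_j)$ is \emph{not} uniform in $n$ (the small-$n$ contributions do not tend to $0$), but this is immaterial: $\gamma$ annihilates $c_0$ and is dominated by $\limsup_{n\to\infty}$, so only the behaviour of the error as $n\to\infty$ matters, and that part vanishes once the partition is refined and $a_0\to0$, $a_N\to\infty$, using the continuity of $s\mapsto M_s(x)$. The remaining ingredients — the estimates $CTx-Cx\in c_0$ and $C\sigma_n x-((Cx)_{\lfloor m/n\rfloor})_m\in c_0$, and the verification that each $M_s$ and each $B_i$ is indeed a Banach limit — are routine.
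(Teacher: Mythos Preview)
Your argument is correct, but it takes a markedly different and considerably heavier route than the paper's own proof.

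The paper argues in three lines: it quotes \cite[Lemma~3.7]{SUZ4} to write any $B\in\mathfrak B_\zeta$ as $B=B_1\circ C$ with $B_1\in\mathfrak B$, so that $\liminf (Cx)_k\le Bx\le\limsup (Cx)_k$; it then picks the concrete sequence $x=\sum_{k\ge0}\chi_{[4^k,2\cdot4^k)}$, computes $\tfrac13\le Bx\le\tfrac23$, and invokes \cite[Proposition~2.4]{KM} (any $B\in\ext\mathfrak B$ takes only the values $0$ or $1$ on $\{0,1\}^{\mathbb N}$) to obtain the contradiction. No decomposition of $B$ and no appeal to Theorem~\ref{T5} is needed.

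Your proof, by contrast, is structural: you build the integral representation $B=\int_0^\infty\rho(s)M_s\,ds$, use it to show that extremality forces $M_s\equiv M_1=\gamma\circ C$, then prove the additional fact $B\in\mathfrak B(\sigma_n)$ for every $n$, and finally invoke Theorem~\ref{T5} (equivalently \cite[Theorem~14]{ASSU2}) to reach a contradiction. This yields more information --- in particular, it shows that the only candidate for an extreme $B\in\mathfrak B_\zeta$ would have to be simultaneously dilation invariant --- but it leans on substantially deeper input (the distance result between $\mathfrak B(\sigma_n)$ and $\ext\mathfrak B$), whereas the paper's test-sequence argument is elementary once one knows $B$ factors through $C$. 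Note also that your integral machinery is, in effect, a hands-on reproof of the factorisation $B=B_1\circ C$ that the paper simply cites; having obtained $B=\gamma\circ C$, you could equally well finish with the paper's test sequence and avoid Theorem~\ref{T5} altogether.

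A small technical remark: your uniform bound $|v_n(s)|\le 2\|x\|_{\ell_\infty}$ is false as written if $x_0\ne0$ and $ns<1$ (then $v_n(s)=x_0/(ns)$), but with the paper's indexing $x=(x_1,x_2,\dots)$ the sum is empty for $ns<1$ and the bound $|v_n(s)|\le\|x\|_{\ell_\infty}$ holds for all $n,s$; this does not affect the argument. Likewise, the clause ``after possibly replacing $x_0$ by $-x_0$'' is unnecessary: continuity of $s\mapsto M_s(x_0)$ and the averaging identity already force $\{M_s(x_0)>c\}$ to have $\rho$-measure in $(0,1)$.
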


\begin{proof}
It was shown in the proof of Lemma 3.7 in \cite{SUZ4} that every $B\in \mathfrak B_\zeta$ can be written in the form $B = B_1 \circ C$ for some Banach limit $B_1$. Since every Banach limit  is an extended limit, it follows that
$$\liminf_{k\to\infty} x_k \le B_1x \le \limsup_{k\to\infty} x_k$$
for every $x\in \ell_\infty$. Thus,
$$\liminf_{k\to\infty} (Cx)_k \le Bx \le \limsup_{k\to\infty} (Cx)_k$$
for every $x\in \ell_\infty$.

Consider the sequence 
$$x = \sum_{k=0}^\infty \chi_{[4^k, 2\cdot 4^k)}\in \ell_\infty.$$

It follows from~\cite[Proposition 2.4]{KM} that for every $B\in {\rm ext} \ \mathfrak B$ the value of $Bx$ is either $0$ or $1$. 
Thus, it is sufficient to show that that $0< Bx<1$.

Direct computations yield
$$ Bx \le \limsup_{n\to\infty} (Cx)_n = \limsup_{n\to\infty} \frac1{2\cdot 4^n}\sum_{k=0}^{2\cdot 4^n}x_k=\limsup_{n\to\infty} \frac1{2\cdot 4^n}\sum_{k=0}^{n}4^k=\frac23$$
and
$$ Bx \ge \liminf_{n\to\infty} (Cx)_n = \liminf_{n\to\infty} \frac1{4^n-1}\sum_{k=0}^{4^n-1}x_k=\liminf_{n\to\infty} \frac1{4^n-1}\sum_{k=0}^{n-1}4^k=\frac13.$$

This completes the proof.
\end{proof}

Acknowledgment. The work of the first and the third authors was supported by the Russian Science Foundation (grant 19-11-00197), the work of the second and the fourth authors was supported by the Australian Research Council.

\vspace{10mm}

Evgenii Semenov, Faculty of Mathematics, Voronezh State University, Voronezh, 394006 Russia. \texttt{semenov@math.vsu.ru}

Fedor Sukochev, School of Mathematics and Statistics, The University of New South Wales, 2052, NSW, Australia. \texttt{f.sukochev@unsw.edu.au}

Alexandr Usachev, School of Mathematics and Statistics, Central South University, Hunan, China. \texttt{alex.usachev.ru@gmail.com}

Dmitriy Zanin, School of Mathematics and Statistics, The University of New South Wales, 2052, NSW, Australia. \texttt{d.zanin@unsw.edu.au}
\end{document}